
\documentclass[12pt]{amsart}

\usepackage{latexsym}
\usepackage{psfrag}
\usepackage{amsmath}
\usepackage{amssymb}
\usepackage{epsfig}
\usepackage{amsfonts}
\usepackage{amscd}
\usepackage{mathrsfs}
\usepackage{graphicx}
\usepackage{enumerate}

\oddsidemargin 0in
\textwidth 6.5in
\evensidemargin 0in
\topmargin -.4in
\textheight 9in



\parindent0pt
\parskip1.6ex



\newtheorem{theorem}{Theorem}
\newtheorem{lemma}[theorem]{Lemma}

\newtheorem{cor}[theorem]{Corollary}

\newtheorem{defn}[theorem]{Definition}

\dedicatory{Dedicated to Bruce C. Berndt for his 80th birthday.}

\begin{document}

\title[Two formulas in Slater's List]
{A combinatorial construction \\ for two formulas in Slater's List}

\author[Kur\c{s}ung\"{o}z]{Ka\u{g}an Kur\c{s}ung\"{o}z}
\address{Faculty of Engineering and Natural Sciences, Sabanc{\i} University, \.{I}stanbul, Turkey}
\email{kursungoz@sabanciuniv.edu}

\subjclass[2010]{05A17, 05A15, 11P84}

\keywords{integer partition, partition generating function, Rogers-Ramanujan identities, Slater's list}

\date{November 2019}

\begin{abstract}
\noindent
We set up a combinatorial framework 
for inclusion-exclusion on the partitions into distinct parts
to obtain an alternative generating function of partitions into distinct and non-consecutive parts.  
In connection with Rogers-Ramanujan identities, 
the generating function yields two formulas in Slater's list.  
The same formulas were constructed by Hirschhorn.  
Similar formulas were obtained by Bringmann, Mahlburg and Nataraj.  
We also use staircases to give alternative triple series 
for partitions into $d-$distinct parts for any $d \geq 2$.  
\end{abstract}

\maketitle


\section{Introduction} 
\label{secIntro}

Number 19 in Slater's list~\cite{Slater-GG} is 
\begin{align}
\label{eqSlater19}
  (-q; q)_\infty \sum_{n \geq 0} \frac{(-1)^j q^{3j^2}}{ (q^2; q^2)_j (-q; q)_{2j} } 
  = \frac{ 1 }{ (q; q^5)_\infty (q^4; q^5)_\infty }.  
\end{align}

Here, we use the $q$-Pochhammer symbols~\cite{GR}
\begin{align*}
 (a; q)_n = \prod_{j = 1}^{n} (1 - aq^{j-1})
\end{align*}
for $j \in \mathbb{N} \cup \{ \infty \}$.  

The companion to \eqref{eqSlater19} in~\cite{Slater-GG} is Number 15: 
\begin{align}
\label{eqSlater15original}
  (-q; q)_\infty \sum_{n \geq 0} \frac{(-1)^j q^{3j^2 - 2j }}{ (q^2; q^2)_j (-q; q)_{2j} } 
  = \frac{ 1 }{ (q^2; q^5)_\infty (q^3; q^5)_\infty }.  
\end{align}

In his PhD thesis Chapter 5~\cite{Hirschhorn-thesis}, 
Hirschhorn gave a combinatorial construction of \eqref{eqSlater19} 
and
\begin{align}
\label{eqSlater15alt}
  (-q; q)_\infty \sum_{n \geq 0} \frac{(-1)^j q^{3j^2 + 2j}}{ (q^2; q^2)_j (-q; q)_{2j+1} } 
  = \frac{ 1 }{ (q; q^5)_\infty (q^4; q^5)_\infty }.  
\end{align}
He showed the equivalence of \eqref{eqSlater15original} and \eqref{eqSlater15alt}, as well.  
He elaborated on the \emph{length of runs} (called sequences in~\cite{Hirschhorn-thesis})
in a partition into distinct parts.  

In this note, 
we define certain moves on partitions into distinct parts, 
and apply inclusion-exclusion on the number of runs of length at least two 
to obtain the formulas \eqref{eqSlater19} and \eqref{eqSlater15alt}.  
By adding staircases, we show that it is possible to get alternative triple series 
as generating functions of partitions into $d$-distinct parts.  

Section \ref{secDef} has the necessary definitions and $q$-series formulas we will use.  
In Section \ref{secMain}, we give the main construction.  
Section \ref{secShiftAndStaircases} discusses how to enhance the construction 
and obtain the desired $q$-series identities.  

It should be also noted that Bringmann, Mahlburg and Nataraj~\cite{BMN} 
found a generating function for partitions into distinct parts 
without $k$-sequences (runs of length $k$) for $k \geq 2$:  
\begin{align}
\label{genFuncBMNptnDistKRunFree}
  \mathcal{C}_k(x; q)
  = \sum_{j, r \geq 0} 
    \frac{ (-1)^j x^{kj + r} q^{ \frac{(r+kj)(r + kj + 1)}{2} + k \frac{j(j-1)}{2} } }
      { (q^k; q^k)_j (q; q)_r }.  
\end{align}
Should we expand this as a double power series in $x$ and $q$, 
the exponent of $q$ is the number being partitioned, and the exponent of $x$ is the number of parts.  
Obviously, $\mathcal{C}_2(1; q)$ is identical to \eqref{eqSlater19}, 
and $\mathcal{C}_2(q; q)$ to \eqref{eqSlater15original}.  
The combinatorial construction of \eqref{genFuncBMNptnDistKRunFree} in~\cite{BMN}
is reminiscent of Hirshhorn's construction of \eqref{eqSlater19}, 
but they have an alternative proof by solving $q-$difference equations, 
asymptotic formulas for the enumerants they consider, 
as well as a fairly extensive list of references.

\section{Definitions and Auxiliary Formulas} 
\label{secDef} 

A partition of a positive integer $n$ 
is a finite non-decreasing sequence of non-negative integers whose sum is $n$.  
\begin{align*}
 \lambda: \lambda_1 + \lambda_2 + \cdots + \lambda_l = n, 
\end{align*}
where $0 \leq \lambda_1 \leq \lambda_2 \leq \cdots \leq \lambda_l$.  
The number being partitioned, $n$, is called the \emph{weight} of the partition, 
denoted by $\vert \lambda \vert$.  
The number of parts, $l$, is called the length of the partition, 
denoted by $l(\lambda)$.  
Allowing zeros in a partition does not change the weight, 
but it changes the length.  

Given a positive integer $d$, 
a partition (into positive parts) is called $d$-distinct if 
$\lambda_{j} - \lambda_{j-1} \geq d$ for $j = 2, 3, \ldots, l$.  
$1$-distinct partitions are just partitions into distinct parts.  
2-distinct partitions are partitions into distinct and non-consecutive parts.  

In a partition into distinct parts, 
a maximal streak of consecutive parts is called a \emph{run}.  
For example, the partition $1, 2, 3, 5, 7, 8, 9$ has three runs: 
$\{ 1, 2, 3 \}, \{ 5 \}, \{ 7, 8, 9 \}$.  

\begin{defn}
\label{defRaft}
  A \emph{raft} in a partition into distinct parts is 
  a pair of consecutive parts $[k, k+1]$ such that $k+2$ is not a part.  
\end{defn}

In other words, a raft is the largest two parts in a run with two or more parts.  
Please observe the maximum number of rafts in a partition into distinct parts 
is the number of runs with two or more parts.  
Not all largest pairs in runs need to be designated as rafts.  
For instance, the partition $1, 2, 3, 5, 7, 8, 9$ could have 
\begin{enumerate}[a)]
 \item no designated rafts as $1, 2, 3, 5, 7, 8, 9$, 
 \item one designated raft as $1, [2, 3], 5, 7, 8, 9$ 
  or $1, 2, 3, 5, 7, [8, 9]$, 
 \item  or two designated rafts as $1, [2, 3], 5, 7, [8, 9]$.  
\end{enumerate}

In the last instance, the rafts $[2, 3]$ and $[8, 9]$ can be compared in the obvious way.  
The former will be called the smaller, and the latter the larger.  
Because there are no other designated rafts between them, 
those two rafts will be called $successive$ rafts.  

For convenience, we indicate the designated rafts by square brackets around them.  

If the next smallest part after the raft $[k, k+1]$ is at least $k+4$, 
that is, if the way is \emph{clear ahead}, 
then the raft can move forward as follows.  

\begin{align*}
  (\textrm{parts } \leq k-1), [k, k+1], (\textrm{parts } \geq k+4)
\end{align*}
\begin{align}
\label{eqSimpleFwdMove}
  \Bigg\downarrow \textrm{ one forward move} 
\end{align}
\begin{align*}
  (\textrm{parts } \leq k-1), [k+1, k+2], (\textrm{parts } \geq k+4)
\end{align*}

Instead, if there is a run of length $s\geq 1$ 
containing $k+3$ with no designated raft at the end, 
then the raft moves forward as follows.  

\begin{align*}
  (\textrm{parts } \leq k-1), [k, k+1], k+3, k+4, \ldots, k+s+2, (\textrm{parts } \geq k+s+4)
\end{align*}
\begin{align}
\label{eqFwdMoveAndAdjustment}
  \Bigg\downarrow \textrm{ one forward move} 
\end{align}
\begin{align*}
  (\textrm{parts } \leq k-1), [k+1, k+2], k+3, k+4, \ldots, k+s+2, (\textrm{parts } \geq k+s+4)
\end{align*}
\begin{align*}
  \Bigg\downarrow \textrm{ rearranging} 
\end{align*}
\begin{align*}
  (\textrm{parts } \leq k-1), k+1, k+2, \ldots [k+s+1, k+s+2], (\textrm{parts } \geq k+s+4)
\end{align*}

With or without an obstacle, 
a forward move increases the weight of the partition by two.  
The rearrangement does not change the weight.  
Backward moves as inverses of forward moves are defined analogously.  

The careful reader will have noticed that Definition \ref{defRaft} 
stipulates that rafts do not \emph{collide} nor are they \emph{docked to the same platform}.  
That is, there can be at most one designated raft in a run.  
For instance, 
\begin{align*}
 \ldots,[k, k+1], [k+2, k+3], \ldots
\end{align*}
is not an admissible configuration.  
We will have more to say about this requirement after the proof of Theorem \ref{thmNoRaftGenFunc}.  

Below are some formulas from~\cite{GR} that we are going to use in the proofs.  
\begin{align}
\label{eqQBinomialLimiting}
  (-z; q)_\infty = & \sum_{n \geq 0} \frac{q^{n(n-1)/2} z^n}{ (q; q)_n } \\
\label{eqQBinomial}
  \frac{ (az; q)_\infty }{ (z; q)_\infty } = & \sum_{n \geq 0} \frac{ (a; q)_n z^n}{ (q; q)_n } \\ 
\label{eqQGauss}
  \sum_{n \geq 0} \frac{ (a; q)_n (b; q)_n (c/ab)^n }{ (q; q)_n (c; q)_n } 
  = & \frac{ (c/a; q)_\infty (c/b; q)_\infty }{ (c; q)_\infty (c/ab; q)_\infty  }
\end{align}

\section{Main Results}
\label{secMain}

The proof of the following lemma is straightforward, 
and left to the reader.  

\begin{lemma}
\label{lemmaMovesAllowMoves}
  Let $[k, k+1]$ and $[l, l+1]$ be successive rafts 
  in a given partition into distinct parts where $k < l$.  
  \begin{enumerate}[(i)]
   \item When conditions exist, a forward move on $[l, l+1]$ allows at least one forward move 
    on $[k, k+1]$.  
   \item When conditions exist, a backward move on $[k, k+1]$ allows at least on backward move 
    on $[l, l+1]$.  
  \end{enumerate}
\end{lemma}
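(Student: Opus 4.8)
The plan is to verify both parts by a direct case analysis on the local configuration between the two successive rafts $[k,k+1]$ and $[l,l+1]$, using the two move types in \eqref{eqSimpleFwdMove} and \eqref{eqFwdMoveAndAdjustment} as the only operations available. The key observation is that a forward move on the larger raft $[l,l+1]$ either vacates the part $l$ (in the clear-ahead case) or vacates $l$ and simultaneously re-docks the raft further up via the rearrangement; in both cases the integer $l$ is no longer occupied \emph{as the top of a raft} immediately after the move, but more importantly the gap just above $k+1$ is unaffected, so whatever obstruction existed for $[k,k+1]$ before the move on $[l,l+1]$ is still the same obstruction afterward. Since we are told in the hypothesis of (i) that ``conditions exist'' for a forward move on $[k,k+1]$, that move was already legal; the only thing that could go wrong is that performing the move on $[l,l+1]$ first destroys the legality. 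So the real content is: \emph{moving the upper raft forward never shrinks the room in front of the lower raft}.

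First I would dispose of the clear-ahead case for $[l,l+1]$: here the move \eqref{eqSimpleFwdMove} changes only the parts $l, l+1$ into $l+1, l+2$ and touches nothing at or below $k+3$ (since $k+1 < l$ and $[k,k+1]$, $[l,l+1]$ are successive, there are no designated rafts strictly between, and $l \geq k+3$ because otherwise the rafts would collide in the sense disallowed after Definition~\ref{defRaft}); hence the configuration of parts in $\{k+2, k+3, k+4\}$ is untouched, so if a forward move on $[k,k+1]$ was available before, it is still available — in fact we must check $l \ne k+3$ so that the move on $[l,l+1]$ doesn't consume the slot $k+3$ that $[k,k+1]$ might want; but if $l = k+3$ then $[k,k+1]$ faced the run starting at $k+3$ and moved by the obstacle rule \eqref{eqFwdMoveAndAdjustment}, and I would treat that under the second case. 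Second, the obstacle case for $[l,l+1]$: the move plus rearrangement replaces $l,l+1,l+3,\dots,l+s+2$ by $l+1,l+2,\dots,l+s+2$ with the raft re-docked at the top; again every part $\le l-1 \ge k+2$ is untouched, so the room in front of $[k,k+1]$ is unchanged. The one genuinely delicate subcase, which I expect to be the main obstacle, is when the two rafts are ``almost adjacent'', i.e. $l$ is as small as the collision constraint permits (roughly $l = k+3$, so $[k,k+1]$ sits directly below the run topped by $[l,l+1]$): here the forward move on $[l,l+1]$ together with its rearrangement lengthens the run sitting above $k+1$ downward toward $k+1$ — but this is exactly the situation where $[k,k+1]$ must use the obstacle rule, and one checks that after the rearrangement the part $k+3$ is still present, still not designated as a raft, and the run above it is one longer, so \eqref{eqFwdMoveAndAdjustment} applies verbatim (with $s$ increased by one).

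Part (ii) I would obtain by the symmetry already announced in the text: backward moves are defined as the inverses of forward moves, so a backward move on $[k,k+1]$ is a forward move read in reverse, and the statement ``a backward move on $[k,k+1]$ allows a backward move on $[l,l+1]$'' is logically the contrapositive packaging of part (i) applied to the reversed partition — concretely, if after the backward move on $[k,k+1]$ no backward move on $[l,l+1]$ were possible, then in the resulting partition a forward move on $[l,l+1]$ would be blocked while a forward move on $[k,k+1]$ (namely the inverse of the one just performed) is available, contradicting (i). Alternatively one repeats the same local case analysis with all inequalities reversed; I would present whichever is shorter. The whole argument is bookkeeping with part-gaps, which is presumably why the authors call it ``straightforward'' and leave it to the reader; the only place worth writing out is the almost-adjacent subcase described above.
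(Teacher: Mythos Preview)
The paper omits the proof entirely (``straightforward, left to the reader''), so there is no published argument to compare against; a local case analysis of the kind you outline is exactly what is expected.

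That said, two points in your write-up need repair. First, you read ``when conditions exist'' as a hypothesis on $[k,k+1]$, i.e.\ you assume the lower raft is already movable and then check that moving $[l,l+1]$ does not spoil this. The intended reading is the opposite: the phrase qualifies the move on $[l,l+1]$, and the claim is that \emph{performing} that move makes a forward move on $[k,k+1]$ possible, whether or not one was possible before. This matters for the application in Corollary~\ref{corPtnGenFunc}: in the minimal partition $\beta$ of Lemma~\ref{lemmaMinimalPtnGenFunc}, every run above a non-top raft is capped by the next designated raft, so \emph{no} smaller raft can move forward initially --- your hypothesis would be vacuous there. Fortunately your analysis almost proves the stronger statement anyway: in every case the forward move on $[l,l+1]$ removes the part $l$, so the run starting at $k+3$ (if any) now terminates at or below $l-1$ and carries no designated raft, hence $[k,k+1]$ can move. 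Your ``delicate subcase'' paragraph gets this backwards, asserting that $k+3$ is ``still present'' after the move on $[l,l+1]$ when $l=k+3$; in fact $k+3=l$ is precisely the part that vanishes, and $[k,k+1]$ then moves by the clear-ahead rule \eqref{eqSimpleFwdMove}, not the obstacle rule.

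Second, your contrapositive argument for (ii) does not go through: ``no backward move on $[l,l+1]$ is possible'' has nothing to do with ``a forward move on $[l,l+1]$ is blocked'', and part (i) gives no implication from the availability of a forward move on the lower raft to anything about the upper one. Your fallback --- repeat the case analysis with directions reversed --- is the correct route: a backward move on $[k,k+1]$ alters only parts $\le k+1$, while the run containing $[l,l+1]$ lies entirely in $[k+3,l+1]$, so its bottom $l-u$ (with $l-u-1$ absent) is unchanged and the backward move on $[l,l+1]$ lands at $[l-u-1,l-u]$ without colliding with the new lower raft, since that raft now sits at some $[k',k'+1]$ with $k'\le k-1$ and $k'+2$ absent.
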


\begin{lemma}
\label{lemmaMinimalPtnGenFunc}
  Let $\beta$ be a minimal partition into distinct parts 
  having exactly $k$ designated rafts for $k \geq 1$.  
  That is, no further backward moves are possible on any of the designated rafts of $\beta$.  
  Then, a generating function of such $\beta$'s is 
  \begin{align}
  \label{eqMinimalPtnGenFunc}
    \sum_{\beta} q^{\vert \beta \vert}
    = \sum_{m \geq 0} q^{\binom{3k+m}{2} - 3\binom{k}{2}} 
    \begin{bmatrix} m+k-1 \\ k-1 \end{bmatrix}_{q^{-1}}
    (-q^{3k+m+1}; q)_{\infty}.  
  \end{align}

\end{lemma}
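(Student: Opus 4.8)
The plan is to describe a minimal partition $\beta$ with exactly $k$ designated rafts explicitly, then read off its generating function as a sum over the "extra" material that can be inserted. First I would analyze what minimality forces. Since no backward move is possible on any designated raft, each raft $[a,a+1]$ must be "blocked from behind": either $a-1$ and $a-2$ are both parts (so the raft is at the top of a long run with no room to slide down) or, more precisely, the run containing the raft must be pushed as far down as the rafts below it allow. Because the rafts cannot collide (Definition \ref{defRaft} — at most one raft per run) and successive rafts constrain each other via Lemma \ref{lemmaMovesAllowMoves}, in a minimal configuration the $k$ rafts together with the runs they cap are jammed down against the bottom. I would argue that the minimal skeleton, ignoring any additional parts, is the partition $1,2,3,\dots$ arranged into $k$ runs each of length at least $2$, packed as low as possible: the raft-runs occupy the smallest available integers, with exactly one gap between consecutive runs. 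Concretely the bare minimum (each raft-run has length exactly $2$, minimal gaps) is $[1,2],[4,5],[7,8],\dots,[3k-2,3k-1]$, whose weight is $\sum_{i=1}^{k}\big((3i-2)+(3i-1)\big)=3k^2-2k = \binom{3k}{2}-3\binom{k}{2}$, matching the $m=0$ term.

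Next I would account for the freedom on top of this skeleton. The parameter $m$ should count the total number of extra parts we lengthen the $k$ raft-runs by; these extra parts are inserted \emph{below} each raft (lengthening the runs downward is what "minimal" permits, since lengthening keeps the raft blocked). Distributing $m$ indistinguishable extra units of run-length among the $k$ runs, and keeping one gap between successive runs, turns the skeleton $[1,2],[4,5],\dots$ into a configuration whose parts are a shifted staircase; summing $q$ to the weight over all such distributions of $m$ into $k$ ordered nonnegative parts gives a Gaussian-binomial factor. Here is where the $q^{-1}$-Gaussian binomial $\begin{bmatrix} m+k-1\\ k-1\end{bmatrix}_{q^{-1}}$ enters: because the runs closer to the bottom sit at smaller integers, adding a unit of length to a lower run costs less weight than adding it to a higher run, so the natural generating function for the distribution is in $q^{-1}$ rather than $q$; one computes the base weight $\binom{3k+m}{2}-3\binom{k}{2}$ for the "all extra length pushed to the top run" configuration and then the Gaussian binomial in $q^{-1}$ redistributes. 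Finally, after the $k$ raft-runs (the last part being $3k+m-1$ in the top configuration, or more transparently: the largest part of the skeleton-plus-extras is $3k+m$ shifted appropriately), every integer $\geq 3k+m+1$ is freely available as an additional distinct part with no constraint, contributing $(-q^{3k+m+1};q)_\infty$. Assembling the three factors — the power of $q$ for the packed skeleton, the $q^{-1}$-Gaussian binomial for redistributing the $m$ extra run-units, and the Euler product for arbitrary larger parts — yields \eqref{eqMinimalPtnGenFunc}.

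The main obstacle I anticipate is pinning down \emph{exactly} which partitions are minimal and getting the bookkeeping of the exponents to match. It is tempting to also allow a raft-run to be blocked by the run below it being adjacent (gap less than the forced minimum), but one must check that the "one gap between successive runs" normalization is forced by minimality together with the no-collision rule, and that no double counting occurs between "lengthening a run" and "shrinking a gap." I would handle this by setting up a clean bijection: a minimal $\beta$ with $k$ rafts $\longleftrightarrow$ a triple $(m;\ (m_1,\dots,m_k);\ S)$ where $m=\sum m_i\geq 0$ records the extra length of each raft-run, $S$ is an arbitrary finite set of integers $> 3k+m$, and then verify $|\beta| = \binom{3k+m}{2}-3\binom{k}{2} - \sum_i (\text{weight saved by pushing run } i \text{ down}) + |S|$, with the middle term generating the $q^{-1}$-binomial via the standard identity $\sum \begin{bmatrix} m+k-1\\k-1\end{bmatrix}_{q} = \prod 1/(q;q)$-type expansion restricted to $m$ fixed. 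The degenerate case $k=1$ (a single raft, no "successive" constraints, binomial $\begin{bmatrix} m\\0\end{bmatrix}_{q^{-1}}=1$) is a good sanity check and I would verify it first.
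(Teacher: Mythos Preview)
Your plan is correct and matches the paper's approach: both identify a minimal $\beta$ as $k$ tightly packed raft-runs (first run starting at $1$, exactly one missing integer between consecutive runs) followed by arbitrary distinct parts $\geq r_k+3=3k+m+1$. The only cosmetic difference is that the paper parametrizes by the position $r_k=3k+m-2$ of the top raft and counts the $k-1$ \emph{missing} parts among $\{1,\dots,r_k+1\}$ (obtaining the Gaussian binomial by complementation, then passing to $q^{-1}$), whereas you distribute the $m$ extra run-length units among the $k$ runs directly; these are dual bookkeepings of the same bijection and yield the identical factor $q^{\binom{3k+m}{2}-3\binom{k}{2}}\begin{bmatrix} m+k-1\\k-1\end{bmatrix}_{q^{-1}}$.
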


\begin{proof}
 Let $[r_1, r_1 + 1]$ be the smallest raft in $\beta$.  
 Then, $\beta$ must contain the parts $1, 2, \ldots, r_1-1$.  
 Any missing part among those would have allowed a backward move on $[r_1, r_1 + 1]$.  
 In addition, $r_1 + 1$ cannot be a part in $\beta$.  
 
 Induction on $k$ gives $\beta$ as 
 \begin{align*}
  \beta = & 1, 2, \ldots, r_1 - 1, [r_1, r_1 + 1], \\
  & r_1 + 3, r_1 + 4, \ldots, r_2 - 1, [r_2, r_2 + 1], \\
  & \vdots \\
  & r_{k-1}+3, r_{k-1} + 4, \ldots, r_k - 1, [r_k, r_k+1], \\
  & (\textrm{ parts } \geq r_k+3 \textrm{ containing no rafts}).  
 \end{align*}
 Here, $r_j \geq r_{j-1} + 3$ for $j = 2, 3, \ldots, k$, so $r_k \geq 3k-2$.  
 
 In order to have $k$ rafts among the parts $1, 2, \ldots, r_k+1$, 
 we need $k-1$ missing parts.  
 The missing parts of the displayed $\beta$ above are 
 $r_1 + 2$, $r_2 + 2$, \ldots, $r_{k-1} + 2$.  
 Since $r_{j-1} + 2 \leq r_j - 1$, the missing parts are pairwise at least three apart.  
 
 For a moment, we focus on these missing parts.  
 To generate them, we start with 
 $3$, $6$, \ldots, ${3k-3}$, 
 and add $\mu_1$ to $3k-3$, $\mu_2$ to $3k-6$ etc.  
 $r_{k-1} = 3k-3 + \mu_1 \leq r_k-1$, so $\mu_1 \leq r_k - 3k - 2$.  
 To retain the difference conditions between $\mu$'s, 
 we must have $\mu_1 \geq \mu_2 \geq \cdots \mu_{k-1} \geq 0$.  
 Thus, $\mu$'s form a restricted partition into at most $k-1$ parts, 
 all of which are at most $r_k - 3k + 2$.  
 As such, the $\mu$'s are generated by the Gaussian polynomial 
 $\begin{bmatrix} r_k - 3k + 2 + k - 1 \\ k - 1 \end{bmatrix}$.  
 Thus, the parts of the complementary part of $\beta$ are generated by
 \begin{align}
 \label{eqGenFuncBetaInitMissing}
  q^{3\binom{k}{2}}\begin{bmatrix} r_k - 2k + 1 \\ k - 1 \end{bmatrix}.  
 \end{align}
 
 To obtain the generating function of the portion of $\beta$ 
 containing parts that are at most $r_k+1$, 
 we need to extract the missing parts from among 
 $1$, $2$, \ldots, $r_k+1$.  
 This amounts to replacing $q$ by $q^{-1}$ in \eqref{eqGenFuncBetaInitMissing}, 
 and multiplying by $q^{\binom{r_k + 2}{2}}$: 
 \begin{align}
 \label{eqGenFuncBetaInit}
  q^{\binom{r_k+2}{2} - 3\binom{k}{2}}\begin{bmatrix} r_k - 2k + 1 \\ k - 1 \end{bmatrix}_{q^{-1}}.  
 \end{align}
 
 The parts of $\beta$ that are at least $r_k+3$ merely form a partition into distinct parts.  
 They are generated by $(-q^{r_k + 3}; q)_{\infty}$, 
 to be multiplied by \eqref{eqGenFuncBetaInit}.  
 Then, summing over $r_k \geq 3k-2$ 
 and the change of parameter $r_k = m + 3k - 2$ will conclude the proof.  
\end{proof}

{\bf Remark: } If a partition into distinct parts has no designated rafts, 
it is generated by $(-q; q)_{\infty}$.  
This is not the $k = 0$ case of Lemma \ref{lemmaMinimalPtnGenFunc}, 
so it has to be handled separately.  

\begin{cor}
\label{corPtnGenFunc}
 Let $\lambda$ be a partition into distinct parts 
 having exactly $k$ designated rafts for $k \geq 1$.  
 A generating function for such $\lambda$ is 
 \begin{align}
 \label{eqPtnGenFunc}
    \sum_{\lambda} q^{\vert \lambda \vert}
    = \sum_{m \geq 0} q^{\binom{3k+m}{2} - 3\binom{k}{2}} 
    \begin{bmatrix} m+k-1 \\ k-1 \end{bmatrix}_{q^{-1}}
    \frac{(-q^{3k+m+1}; q)_{\infty}}{ (q^2; q^2)_k }.  
 \end{align}
\end{cor}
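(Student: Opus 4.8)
The plan is to deduce Corollary~\ref{corPtnGenFunc} from Lemma~\ref{lemmaMinimalPtnGenFunc} by understanding how an arbitrary partition $\lambda$ with exactly $k$ designated rafts decomposes relative to its \emph{minimal} representative $\beta$ under backward moves. Given such a $\lambda$, repeatedly apply backward moves to every designated raft (in the order prescribed by Lemma~\ref{lemmaMovesAllowMoves}, smallest raft first) until no further backward move is possible; the result is a minimal partition $\beta$ with exactly $k$ designated rafts, of the form displayed in the proof of Lemma~\ref{lemmaMinimalPtnGenFunc}. The content of the corollary is that the fibers of the map $\lambda \mapsto \beta$ are generated by $1/(q^2;q^2)_k$, independently of $\beta$; multiplying the generating function \eqref{eqMinimalPtnGenFunc} of the $\beta$'s by this factor then yields \eqref{eqPtnGenFunc}.

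First I would make precise the reconstruction of $\lambda$ from $\beta$. Starting from $\beta$, one builds $\lambda$ by choosing, for each of the $k$ designated rafts, how many forward moves to perform on it; say the $j$-th raft (counting from the smallest) receives $a_j \geq 0$ forward moves, each contributing $2$ to the weight. The subtlety is that the moves interact: by Lemma~\ref{lemmaMovesAllowMoves}, a forward move on a larger raft can open room for the smaller one, and the rearrangement step in \eqref{eqFwdMoveAndAdjustment} means a ``move'' on one raft may physically relocate a different raft. So I would argue that the legitimate sequences of moves are in bijection with $k$-tuples $(a_1,\dots,a_k)$ of nonnegative integers with \emph{no constraints among them} — i.e. that once $\beta$ is fixed, any such tuple is realizable and gives a distinct $\lambda$, and every $\lambda$ over $\beta$ arises exactly once this way. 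The key point making this work is that successive rafts remain separated by at least the gap pattern forced in $\beta$, so performing moves from the largest raft downward never creates a collision; conversely the minimality of $\beta$ guarantees the decomposition of $\lambda$ is unique. Granting this, the fiber over each $\beta$ has generating function $\prod_{j=1}^{k} \frac{1}{1 - q^2} = \frac{1}{(1-q^2)^k}$; I would then invoke \eqref{eqQBinomialLimiting} or a direct partition argument to rewrite $(1-q^2)^{-k}$ as $1/(q^2;q^2)_k$ times a residual factor — actually more carefully, the $k$ independent geometric series in $q^2$ give exactly $1/(1-q^2)^k$, and I would reconcile this with the claimed $1/(q^2;q^2)_k = \prod_{i=1}^k 1/(1-q^{2i})$ by checking that the forward-move counts are better parametrized so that the $j$-th raft contributes a geometric series in $q^{2j}$ rather than $q^2$; this re-parametrization comes from the fact that a forward move on the $j$-th raft, after rearrangement, effectively also shifts the $j-1$ rafts below it, so its true ``cost'' as an independent coordinate is $2j$. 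Pinning down this weighting is the heart of the argument.

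The main obstacle I anticipate is precisely establishing that the forward-move structure on the $k$ rafts is genuinely a free $\mathbb{Z}_{\geq 0}^k$-action with the correct grading, i.e. that the generating function of the fiber is $1/(q^2;q^2)_k$ and not merely $1/(1-q^2)^k$ or something with extra dependencies. This requires a careful bookkeeping of the rearrangement step: when raft $[k,k+1]$ bumps into a run and, after moving and rearranging, a raft reappears higher up dragging a block of consecutive parts with it, one must verify the weight increment and confirm that the ``slack'' absorbed by the intervening run is exactly what makes the $j$-th coordinate carry weight $2j$ per unit. I would handle this by induction on $k$, peeling off the smallest raft: the sub-partition on parts $\leq r_1 + 1$ together with its forward moves contributes the innermost geometric factor, and the remainder is a configuration with $k-1$ rafts on a shifted ground set, to which the inductive hypothesis applies.

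Finally, with the fiber generating function identified as $1/(q^2;q^2)_k$, the corollary follows by multiplying term-by-term inside the sum in \eqref{eqMinimalPtnGenFunc}: since the fiber factor is the same for every minimal $\beta$ regardless of the parameter $m$ (equivalently $r_k$), it pulls out of the sum, giving
\begin{align*}
  \sum_{\lambda} q^{\vert \lambda \vert}
  = \frac{1}{(q^2;q^2)_k} \sum_{\beta} q^{\vert \beta \vert}
  = \sum_{m \geq 0} q^{\binom{3k+m}{2} - 3\binom{k}{2}}
    \begin{bmatrix} m+k-1 \\ k-1 \end{bmatrix}_{q^{-1}}
    \frac{(-q^{3k+m+1}; q)_{\infty}}{ (q^2; q^2)_k },
\end{align*}
which is exactly \eqref{eqPtnGenFunc}. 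I would close by remarking, as the paper promises to discuss after Theorem~\ref{thmNoRaftGenFunc}, that the non-collision stipulation in Definition~\ref{defRaft} is what keeps the $k$ raft-coordinates independent, so that the fiber factor is a clean $k$-fold product.
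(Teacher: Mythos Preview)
Your overall strategy---reduce to the minimal $\beta$ of Lemma~\ref{lemmaMinimalPtnGenFunc} and identify the fiber of $\lambda\mapsto\beta$ as $1/(q^2;q^2)_k$---is exactly the paper's approach. But your analysis of the fiber contains a genuine error. A forward move on a raft increases the weight by exactly~$2$, period; the rearrangement in \eqref{eqFwdMoveAndAdjustment} neither changes the weight nor touches any other designated raft. So your claim that a move on the $j$-th raft ``effectively also shifts the $j-1$ rafts below it'' and therefore carries cost $2j$ is simply false, and the attempted re-parametrization to geometric series in $q^{2j}$ does not work.

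What actually produces $1/(q^2;q^2)_k$ is the opposite of what you assert: the move-counts are \emph{not} unconstrained. If you record $\eta_i$ as twice the number of forward moves applied to the $i$-th \emph{largest} raft (performing them largest-raft-first), then Lemma~\ref{lemmaMovesAllowMoves} forces $\eta_1\geq\eta_2\geq\cdots\geq\eta_k\geq 0$; conversely, running backward moves smallest-raft-first recovers a unique such weakly decreasing tuple from any $\lambda$. Thus the fiber is the set of partitions into at most $k$ even parts, whose generating function is $1/(q^2;q^2)_k$. Each unit of each $\eta_i$ contributes weight~$1$ (equivalently each move contributes weight~$2$), and it is the \emph{ordering constraint}, not a reweighting, that converts $1/(1-q^2)^k$ into $1/(q^2;q^2)_k$. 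Once you make this correction, the final multiplication step you wrote is exactly right.
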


\begin{proof}
 The only difference of this generating function from \eqref{eqMinimalPtnGenFunc} 
 is the factor $1/(q^2; q^2)_k$.  
 
 This factor generates a partition into $k$ even parts (allowing zeros)
 $\eta_1 \geq \eta_2 \geq \cdots \geq \eta_{k} \geq 0$.  
 Beginning with a minimal partition $\beta$ into distinct parts with $k$ designated rafts, 
 we move the largest raft forward $\eta_1/2$ times, the next largest $\eta_2/2$ etc. in this order.  
 Thanks to Lemma \ref{lemmaMovesAllowMoves} and the comparison between $\eta$'s, 
 all moves are admissible.  
 This will give us a $\lambda$ as described in the statement.  
 
 Conversely, given such $\lambda$, 
 we perform $\eta_{k}/2$ backward moves on the smallest raft 
 so that no further backward moves on it are possible.  
 This uniquely determines even $\eta_k$.  
 Then, we perform $\eta_{k-1}/2$ backward moves on the next smallest raft 
 so that no further backward moves on it are possible.  
 This uniquely determines even $\eta_{k-1}$.  
 Because of Lemma \ref{lemmaMovesAllowMoves}, $\eta_{k-1} \geq \eta_k$.  
 We continue with the next smallest raft etc. 
 to eventually obtain the partition $\eta$'s into $k$ even parts (allowing zeros).  
 
 The fact that forward and backward moves are inverses of each other, 
 and that we perform the moves in the exact reverse order finishes the proof.  
\end{proof}

\begin{theorem}
\label{thmNoRaftGenFunc}
  \begin{align}
  \label{eqNoRaftGenFunc}
    (-q; q)_{\infty}
    + \sum_{k \geq 1} (-1)^k \sum_{m \geq 0} q^{\binom{3k+m}{2} - 3\binom{k}{2}} 
    \begin{bmatrix} m+k-1 \\ k-1 \end{bmatrix}_{q^{-1}}
    \frac{(-q^{3k+m+1}; q)_{\infty}}{ (q^2; q^2)_k }.  
  \end{align}
  generates partitions into distinct parts in which there can be no designated rafts.  
\end{theorem}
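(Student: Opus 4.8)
The plan is to interpret the expression in \eqref{eqNoRaftGenFunc} via inclusion–exclusion on the number of designated rafts, so that a partition into distinct parts with no runs of length $\geq 2$ is counted exactly once and every other partition into distinct parts is counted with total coefficient $0$. First I would fix a partition $\pi$ into distinct parts and let $N = N(\pi)$ denote the number of its runs of length at least two (equivalently, the maximum possible number of designated rafts in $\pi$). By Definition~\ref{defRaft} each run of length $\geq 2$ admits exactly one choice for its raft (the top two parts of that run), and designated rafts in distinct runs never collide, so the partitions-with-rafts whose underlying partition is $\pi$ and which have exactly $k$ designated rafts are in bijection with the $k$-element subsets of the $N$ runs of length $\geq 2$; there are $\binom{N}{k}$ of them. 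Hence, by Corollary~\ref{corPtnGenFunc} together with the Remark handling $k = 0$ via $(-q;q)_\infty$, the coefficient of $q^{|\pi|}$ contributed to \eqref{eqNoRaftGenFunc} by the underlying partition $\pi$ is exactly $\sum_{k = 0}^{N} (-1)^k \binom{N}{k}$.

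Next I would invoke the elementary identity $\sum_{k=0}^{N} (-1)^k \binom{N}{k} = [\,N = 0\,]$: it equals $1$ when $N = 0$ and $0$ when $N \geq 1$. Therefore the total coefficient of $q^{|\pi|}$ arising from $\pi$ is $1$ precisely when $\pi$ has no run of length $\geq 2$, i.e. when $\pi$ has only distinct, non-consecutive parts, and is $0$ otherwise. Summing over all partitions $\pi$ into distinct parts and collecting terms by the number $k$ of designated rafts reproduces exactly the left-hand side of \eqref{eqNoRaftGenFunc}, while the resulting generating function counts each partition into distinct parts with no designated rafts (equivalently, each $2$-distinct partition) once and nothing else. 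This is the claimed statement.

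One point that needs care, and which I expect to be the main subtlety rather than a hard obstacle, is justifying the interchange of the two summations and the grouping by underlying partition: the generating function in \eqref{eqPtnGenFunc} is obtained in Corollary~\ref{corPtnGenFunc} by \emph{summing over all partitions-with-$k$-rafts}, so I must be sure that summing \eqref{eqPtnGenFunc} over $k \geq 1$ (with signs) and adding $(-q;q)_\infty$ is the same as first grouping all partitions-with-rafts by their underlying partition $\pi$ and then summing the signed count $\sum_{k=0}^{N(\pi)}(-1)^k\binom{N(\pi)}{k}$ over $\pi$. This is legitimate because for each fixed power of $q$ only finitely many partitions into distinct parts contribute and each such partition has only finitely many admissible raft-designations, so all the series involved are formal power series in $q$ with well-defined coefficients and the rearrangement is purely finite. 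I would also remark explicitly that the factor $1/(q^2;q^2)_k$ in \eqref{eqPtnGenFunc}, contributed by the forward moves of Corollary~\ref{corPtnGenFunc}, is precisely what promotes the ``minimal'' partitions of Lemma~\ref{lemmaMinimalPtnGenFunc} to all partitions into distinct parts carrying $k$ designated rafts, so that the bijection with $k$-subsets of the runs of length $\geq 2$ is complete.
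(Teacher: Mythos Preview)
Your argument is correct and follows essentially the same inclusion--exclusion approach as the paper: group partitions-with-designated-rafts by their underlying partition $\pi$, observe that the signed count over the number of designated rafts is $\sum_{k=0}^{N(\pi)}(-1)^k\binom{N(\pi)}{k}=[N(\pi)=0]$, and conclude. The additional care you take in justifying the rearrangement of sums and in spelling out the bijection with $k$-subsets of the long runs is welcome but not a departure from the paper's method.
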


{\bf Remark: } If we cannot designate a raft in a partition into distinct parts, 
then that partition has no runs.  
In particular, it does not have any consecutive parts.  
Thanks to the first Rogers-Ramanujan Identity, we have 
\begin{align}
  \nonumber
  (-q; q)_{\infty}
  + & \sum_{k \geq 1} (-1)^k \sum_{m \geq 0} q^{\binom{3k+m}{2} - 3\binom{k}{2}} 
  \begin{bmatrix} m+k-1 \\ k-1 \end{bmatrix}_{q^{-1}}
  \frac{(-q^{3k+m+1}; q)_{\infty}}{ (q^2; q^2)_k } \\
\label{eqRRalt1}
  & = \frac{1}{(q; q^5)_{\infty} (q^4; q^5)_{\infty}}
\end{align}

\begin{proof}[proof of Theorem \ref{thmNoRaftGenFunc}]
 This is a standard inclusion-exclusion argument.  
 The first term $(-q; q)_{\infty}$ generates partitions into distinct parts, 
 which we will interpret as partitions with no designated rafts.  
 These are weighted by $(-1)^0$.  
 The double sum generates partitions into distinct parts, 
 weigted by $(-1)^k$, where $k \geq 1$ is the number of designated rafts in them.  
 
 The number of runs in a partition into distinct parts, say $K$,
 is the maximum possible number of designated rafts the partition can have.  
 If $K = 0$, then the partition can have no designated rafts.  
 If $K > 0$, then we can designate $k$ rafts in $\binom{K}{k}$ different ways, 
 each of which is weighted by $(-1)^k$.  
 In total, the partition is counted 
 \begin{align*}
  \sum_{k = 0}^K \binom{K}{k} (-1)^k = (1 - 1)^K = 0
 \end{align*}
 times, yielding the proof.  
\end{proof}

If we allowed configurations such as $[k,k+1],[k+2, k+3]$, 
then, the partition $1, 2, 3, 4$ would have been counted 
+1 times as $1, 2, 3, 4$; 
-1 times as $1, 2, [3, 4]$; 
and +1 times as $[1, 2], [3, 4]$; 
a total of +1 times.   
Yet, it should have been annihilated by inclusion-exclusion.  
If we tried to solve this problem, insisting that rafts could be adjacent, 
then there will be other problems either making the moves or raft designation ambiguous, 
or making the generating functions unnecessarily complicated.  

%

\begin{proof}[proof of \eqref{eqSlater19}]
 We'll manipulate the left hand side of \eqref{eqNoRaftGenFunc}.  
 In 
 \begin{align*}
  (-q; q)_{\infty} + \sum_{k \geq 1} (-1)^k 
    \sum_{m \geq 0} q^{ \binom{3k+m}{2} - 3 \binom{k}{2} } 
      \begin{bmatrix} m + k - 1 \\ k - 1 \end{bmatrix}_{q^{-1}}
      \frac{ (-q^{3k+m+1}; q)_{\infty} }{ (q^2; q^2)_k }, 
 \end{align*}
 utilize 
 \begin{align*}
  \begin{bmatrix} m + k - 1 \\ k - 1 \end{bmatrix}_{q^{-1}} 
  = q^{-m(k-1)} \begin{bmatrix} m + k - 1 \\ k - 1 \end{bmatrix} 
  = q^{-m(k-1)} \frac{ (q^k; q)_m }{ (q; q)_m },
 \end{align*}
 as well as 
 \begin{align*}
  (-q^{3k+m+1}; q)_{\infty} = \frac{ (-q; q)_\infty }{ (-q; q)_{3k+m} },
 \end{align*}
 to obtain
 \begin{align*}
  (-q; q)_\infty + (-q; q)_\infty \sum_{k \geq 1} \frac{ (-1)^k q^{3k^2} }{ (q^2; q^2)_k }
    \sum_{m \geq 0} \frac{ q^{ \binom{m}{2} + 2km + m } (q^k; q)_m }{ (q; q)_m (-q^{3k+1}; q)_m }.  
 \end{align*}
 \begin{align*}
  = (-q; q)_\infty + (-q; q)_\infty \sum_{k \geq 1} \frac{ (-1)^k q^{3k^2} }{ (q^2; q^2)_k (-q; q)_{3k} }
    \lim_{b \to 0} \sum_{m \geq 0} \frac{ ( -1/b; q)_m (q^k; q)_m }{ (q; q)_m (-q^{3k+1}; q)_m }
      \left( bq^{2k+1} \right)^m.  
 \end{align*}
 Using the $q$-Gauss summation \eqref{eqQGauss}, this transforms to
 \begin{align*}
  = (-q; q)_\infty + (-q; q)_\infty \sum_{k \geq 1} \frac{ (-1)^k q^{3k^2} }{ (q^2; q^2)_k (-q; q)_{3k} }
    \lim_{b \to 0} \frac{ (bq^{3k+1}; q)_\infty (-q^{2k+1}; q)_\infty }
      { (-q^{3k+1}; q)_\infty (bq^{2k+1}; q)_\infty }  
 \end{align*}
 \begin{align*}
  = (-q; q)_\infty + (-q; q)_\infty \sum_{k \geq 1} \frac{ (-1)^k q^{3k^2} }{ (q^2; q^2)_k (-q; q)_{3k} }
    \frac{ ( -q^{2k+1}; q)_\infty }
      { (-q^{3k+1}; q)_\infty } 
 \end{align*}
 \begin{align*}
  = (-q; q)_\infty + (-q; q)_\infty \sum_{k \geq 1} \frac{ (-1)^k q^{3k^2} }{ (q^2; q^2)_k (-q; q)_{2k} }
 \end{align*}
 \begin{align*}
  = (-q; q)_\infty \sum_{k \geq 0} \frac{ (-1)^k q^{3k^2} }{ (q^2; q^2)_k (-q; q)_{2k} }.  
 \end{align*}
 Theorem \ref{thmNoRaftGenFunc} finishes the proof.  
\end{proof}

\section{Shifts and Staircases}
\label{secShiftAndStaircases}

It is possible to keep track of the number of parts in the processes described in Section \ref{secMain}.  
The proofs of the respective results apply \emph{mutatis mutandis}.  
We record the intermediate formulas here for convenience.  

Let $\beta$ be a minimal partition described in Lemma \ref{lemmaMinimalPtnGenFunc}.  
Then, 
\begin{align*}
 \sum_{\beta} q^{\vert \beta \vert} x^{l(\beta)} 
 = \sum_{m \geq 0} q^{ \binom{3k+m}{2} - 3 \binom{k}{2} } x^{2k+m}
  \begin{bmatrix} m+k-1 \\ k-1 \end{bmatrix}_{q^{-1}} 
  (-xq^{3k+m+1}; q)_\infty.  
\end{align*}

Let $\lambda$ be a partition described in Corollary \ref{corPtnGenFunc}.  
Then, 
\begin{align*}
 \sum_{\lambda} q^{\vert \lambda \vert} x^{l(\lambda)} 
 = \sum_{m \geq 0} q^{ \binom{3k+m}{2} - 3 \binom{k}{2} } x^{2k+m}
  \begin{bmatrix} m+k-1 \\ k-1 \end{bmatrix}_{q^{-1}} 
  \frac{(-xq^{3k+m+1}; q)_\infty}{ (q^2; q^2)_k }.  
\end{align*}

Upon accounting for the number of parts, \eqref{eqRRalt1} becomes
\begin{align*}
 (-xq; q)_\infty + \sum_{k \geq 1} (-1)^k 
  \sum_{m \geq 0} q^{ \binom{3k+m}{2} - 3 \binom{k}{2} } x^{2k+m}
  \begin{bmatrix} m+k-1 \\ k-1 \end{bmatrix}_{q^{-1}} 
  \frac{(-xq^{3k+m+1}; q)_\infty}{ (q^2; q^2)_k } 
  = \sum_{n \geq 0} \frac{ q^{n^2} x^n }{ (q; q)_n }.  
\end{align*}
Then, we can go through the same line of reasoning as in the proof of Theorem \ref{thmNoRaftGenFunc}
to obtain 
\begin{align}
\label{eqSlaterMaster}
  (-xq; q)_\infty \sum_{k \geq 0} \frac{ (-1)^k q^{3k^2} x^{2k} }{ (q^2; q^2)_k (-xq; q)_{2k} } 
  = \sum_{n \geq 0} \frac{ q^{n^2} x^n }{ (q; q)_n }.  
\end{align}
In \eqref{eqSlaterMaster}, if we substitute $x = q$ and 
appeal to the second Rogers-Ramanujan identity~\cite{RR}, 
we arrive at \eqref{eqSlater15alt}.  

%

If we expand the left hand side of \eqref{eqSlaterMaster} as a power series in $x$, 
it will be possible to insert staircases into partitions.  
Using \eqref{eqQBinomialLimiting} and \eqref{eqQBinomial}, 
a generating function of partitions into parts with pairwise difference at least two is
\begin{align*}
 \sum_{m, n, k \geq 0} \frac{ q^{ \binom{n+1}{2} } }{ (q; q)_n }
 \cdot \frac{ (-1)^k q^{3k^2} }{ (q^2; q^2)_{k} } 
 \cdot \frac{ (q^{2k}; q)_m (-q)^m }{ (q; q)_m } 
 \cdot x^{ n + 2k + m }.  
\end{align*}
To make the pairwise difference at least $(2+d)$, 
we insert the staircase 
\begin{align*}
 0 + d + 2d + \ldots + (l(\lambda)-1) d
\end{align*}
to each partition $\lambda$.  
In the generating function as a power series in $x$, 
It amounts to replacing $x^N$ by $x^N q^{d\binom{N}{2}}$.  
This operation yields the generating function 
\begin{align}
\label{eqGenFuncPwsDiffTwoPlusD}
 \sum_{m, n, k \geq 0} \frac{ q^{ \binom{n+1}{2} + d \binom{n}{2} } }{ (q; q)_n }
 \cdot \frac{ (-1)^k q^{3k^2 + d \binom{2k}{2}} }{ (q^2; q^2)_{k} } 
 \cdot \frac{ (q^{2k}; q)_m q^{ d \binom{m}{2} } (-q)^m }{ (q; q)_m } 
 \cdot x^{ n + 2k + m } \, q^{2dnk + dnm + 2dkm}  
\end{align}
for partitions into parts with pairwise difference at least $(2+d)$.  
Unfortunately, and as to be expected, 
unless $d = 0$, series-product identities do not seem to help 
reduce the triple sum in \eqref{eqGenFuncPwsDiffTwoPlusD}
into a single sum, or even a double sum.  


\noindent
{\bf Acknowledgements: }
We thank George E. Andrews and Michael D. Hirshhorn for their guidance and help.  

We also thank the anonymous referees for their helpful comments, 
and for pointing out~\cite{BMN}.  

\bibliographystyle{amsplain}

\end{document}